\begin{document}

\title{One inequality inspired by Erd\H{o}s}

\author{Barbora Bat\'\i kov\'a, Tom\'a\v s J.\  Kepka and Petr C.\ N\v emec}

\address{Barbora Bat\'\i kov\'a, Department of Mathematics, CULS,
Kam\'yck\'a 129, 165 21 Praha 6 - Suchdol, Czech Republic}
\email{batikova@tf.czu.cz}

\address{Tom\'a\v s J.\ Kepka, Faculty of Education, Charles University, M.\ Rettigov\'e 4, 116 39 Praha 1, Czech Republic}
\email{kepka@karlin.mff.cuni.cz}

\address{Petr C.\ N\v emec, Department of Mathematics, CULS,
Kam\'yck\'a 129, 165 21 Praha 6 - Suchdol, Czech Republic}
\email{nemec@tf.czu.cz}

\subjclass{11D75}

\keywords{inequality}

\begin{abstract} One less common inequality in positive integers is carefully examined.
\end{abstract}

\maketitle

\newtheorem{ex}{Exercise}
\newtheorem{lemma}{Lemma}
\newtheorem{theorem}{Theorem}
\newtheorem{remark}{Remark}

In 1845, J.\ Bertrand (\cite{B}, pg.\ 129) conjectured such a statement: "For every integer $n$ greater than 6 there always exists at least one prime number contained between $n-2$ and $\frac n2$." Bertrand himself noticed that his proposition (postulate) was true for all $n$ such that $7\le n<6\cdot 10^6$ and believed it remained true for all $n\ge 7$. Seven years later, P.\ L.\ Tchebychef published a complete proof of the conjecture (\cite{T}) and various simplifications of this proof appeared through passing years. It was P.\ Erd\"os in 1932 (\cite{E}) who came up with an elegant and simple proof based on different ideas -- prime decompositions of binomial coefficients (a~thorough exposition of all this can be found in \cite{C}). The approach of Erd\"os may be considered the standard one nowadays. Two remarks could turn out to be of some interest, however. Firstly, a formally weaker form of the postulate is shown in \cite{E}. Namely, that for every positive integer $n$ there is at least one prime $p$ with $n<p\le 2n$. Of course, all odd primes $p$ are of the form $p=2n-1$, $n\ge 2$, whence it is not immediately visible why the lenient form should imply the more severe one. On the other hand, the original proof by Erd\"os can easily be completed to do the work. Secondly, the proof can be definitely viewed as elementary, but no way calculus-free. Nonetheless, we are convinced that every assertion formulated in elementary arithmetic should also have a proof as elementary as possible (e.g., it would be nice to have an elementary proof of the Last Fermat Theorem). It seems that the methods developed by Erd\"os can be transmogrified to yield demonstration(s) if not inside then at least close to Elementary Function Arithmetic -- one of the weakest fragments of arithmetic. In our (not yet quite successful) effort to find such a proof of the Bertrand postulate, we came across several unusual inequalities which may also be of independent interest. One of them is the inequality (1) below.

\medskip In the sequel, $n$ stands for a positive integer. We denote by
\begin{enumerate}
\item[(a)] $z(n)$ the largest integer such that $3z(n)<2n$;
\item[(b)] $m(n)$ the largest integer such that $m(n)^2\le 2n$;
\item[(c)] $r(n)$ the least non-negative integer such that $n\le 2^{r(n)}$;
\item[(d)] $x(n)=z(n)-(r(n)+1)m(n)$;
\end{enumerate}

Our aim is to solve (in positive integers) the inequality
\begin{equation}
z(n)-(r(n)+1)m(n)<0
\end{equation}
and find all positive integers $n$ for which the equality holds. In other words, we want to determine the sign of $x(n)$. The answer is given in Theorem \ref{T1}.

\bigskip\begin{ex}\label{R1} \rm A prospective reader is invited to check the following handy table:

\smallskip
$\begin{array}{c |r r r r r r r r r r r}
n&1&2&3&4&5&6&7&8&9&10&11\\x(n)&-1&-3&-5&-4&-9&-9&-8&-11&-15&-14&-13\end{array}$

\smallskip 
$\begin{array}{c|r r r r r r r r r c c}
n&12&13&20&50&100&200&300&400&500&1000\\x(n)&-13&-17&-23&-37&-46&-47&-41&-14&23&182\end{array}$
\end{ex}

\medskip
One observes readily that the sequences $z(n),m(n),r(n)$ are non-decreasing. For every non-negative integer $k$, $z(n)=k$ iff $3k<2n\le3k+3$ (there is one such $n$ for $k$ even and two such $n$ for $k$ odd), $m(n)=k$ iff $k^2\le2n<(k+1)^2$ (there is no such $n$ for $k=0$, $k$ such $n$ for $k$ odd and $k+1$ such $n$ for $k\ge2$ even), $r(n)=k\ge1$ iff $n\ge2$ and $2^{k-1}<n\le2^k$ (there are $2^{k-1}$ such $n$).

In order to obtain some estimates for $x(n)$, we will use the following observation.

\begin{lemma}\label{L2} Let $a,b,n$ be positive integers such that $a\le n\le b$.\newline 
{\rm(i)} $z(a)-(r(b)+1)m(b)\le x(n)\le z(b)-(r(a)+1)m(a)$.\newline
{\rm(ii)} If $x(a)<0$ and $b$ is the largest integer such that $2b\le3(r(a)+1)m(a)$ then $x(n)<0$.\newline
{\rm(iii)} If $x(b)>0$ and $a$ is the least integer with $2a\ge3(r(b)+1)m(b)+4$ then $x(n)>0$.\end{lemma}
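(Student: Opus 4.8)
The plan is to reduce everything to the monotonicity of the three auxiliary sequences, which the text has already recorded, together with the defining double inequality $3z(n)<2n\le 3z(n)+3$. For part (i), note first that $z,m,r$ are non-decreasing and non-negative, so whenever $a\le n\le b$ we get $z(a)\le z(n)\le z(b)$, $m(a)\le m(n)\le m(b)$ and $r(a)\le r(n)\le r(b)$; since a product of non-negative non-decreasing quantities is non-decreasing, $(r(a)+1)m(a)\le (r(n)+1)m(n)\le (r(b)+1)m(b)$. Combining $z(n)\ge z(a)$ with $(r(n)+1)m(n)\le (r(b)+1)m(b)$ gives the left inequality, and $z(n)\le z(b)$ with $(r(n)+1)m(n)\ge (r(a)+1)m(a)$ gives the right one.

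For part (ii), assume $x(a)<0$, i.e.\ $z(a)\le (r(a)+1)m(a)-1$. From $3(z(a)+1)\ge 2a$ I obtain $2a\le 3z(a)+3\le 3(r(a)+1)m(a)$, so $a$ itself belongs to the set of integers $k$ with $2k\le 3(r(a)+1)m(a)$; hence the prescribed $b$ satisfies $a\le b$ and the interval is non-empty. For any $n$ with $a\le n\le b$, part (i) yields $x(n)\le z(b)-(r(a)+1)m(a)$, and since $3z(b)<2b\le 3(r(a)+1)m(a)$ we get $z(b)<(r(a)+1)m(a)$; as both sides are integers, $z(b)\le (r(a)+1)m(a)-1$, so $x(n)\le -1<0$.

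For part (iii), assume $x(b)>0$, i.e.\ $z(b)\ge (r(b)+1)m(b)+1$. From $3z(b)<2b$ I get $2b>3z(b)\ge 3(r(b)+1)m(b)+3$, hence $2b\ge 3(r(b)+1)m(b)+4$, so $b$ belongs to the set of integers $k$ with $2k\ge 3(r(b)+1)m(b)+4$, and the least such integer $a$ satisfies $a\le b$. For $a\le n\le b$, part (i) gives $x(n)\ge z(a)-(r(b)+1)m(b)$; from $3(z(a)+1)\ge 2a\ge 3(r(b)+1)m(b)+4$ I obtain $3z(a)\ge 3(r(b)+1)m(b)+1$, so (integrality again) $z(a)\ge (r(b)+1)m(b)+1$, whence $x(n)\ge 1>0$.

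I do not expect a serious obstacle here: the only points that require care are the repeated passage between strict and non-strict inequalities, which works because every quantity in sight is an integer, and the verification that the intervals $[a,b]$ appearing in (ii) and (iii) are genuinely non-empty — and that non-emptiness is exactly what the hypotheses $x(a)<0$, respectively $x(b)>0$, are engineered to supply.
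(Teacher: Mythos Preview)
Your argument is correct and follows essentially the same route as the paper: monotonicity of $z,m,r$ for (i), and then in (ii), (iii) the defining inequalities $3z(b)<2b\le 3(r(a)+1)m(a)$ and $3z(a)+3\ge 2a\ge 3(r(b)+1)m(b)+4$ combined with (i). Your extra verification that the intervals in (ii) and (iii) are non-empty is a useful addition not made explicit in the paper.
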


\begin{proof} (i) The sequences $z(n),m(n),r(n)$ are non-decreasing.\newline
(ii) Since $3z(b)<2b\le3(r(a)+1)m(a)$, we have $x(n)\le z(b)-(r(a)+1)m(a)<0$ by (i).\newline
(iii) Since $3z(a)+3\ge2a\ge3(r(b)+1)m(b)+4$,  $x(n)\ge z(a)-(r(b)+1)m(b)>0$ by (i).
\end{proof}

Using this observation, we can expand negative values of $x(n)$ up and positive values down.

In order to facilitate the calculation of $x(n)$, it is useful to distinguish intervals where the values of $m(n)$ and $r(n)$ are constant (and thus the values of $x(n)$ are non-decreasing). 

\begin{lemma}\label{L3} Let $a$ be a positive integer with $r(a)=s$ and $m(a)=t$, $q$ be the largest integer such that $2q<(t
+1)^2$ and $b(a)=\min(q,2^s)$. Then $b(a)$ is the largest positive integer $k$ such that $r(k)=s$, $m(k)=t$ and if $n_1,n_2$ are positive integers such that $a\le n_1\le n_2\le b(a)$ then $x(a)\le x(n_1)=z(n_1)-(r(a)+1)m(a)\le x(n_2)\le x(b(a))$.\end{lemma}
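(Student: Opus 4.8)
The plan is to reduce everything to the three explicit membership criteria recalled just before the lemma: $z(n)=k$ iff $3k<2n\le 3k+3$; $m(n)=k$ iff $k^2\le 2n<(k+1)^2$; and, for $k\ge 1$, $r(n)=k$ iff $2^{k-1}<n\le 2^k$, together with $r(1)=0$. Note first that $t=m(a)\ge 1$, so $(t+1)^2\ge 4$ and the integer $q$ is well defined with $q\ge 1$; by maximality of $q$ we have the key boundary inequality $2(q+1)\ge(t+1)^2$.

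First I would show that $a\le b(a)$ and that $r$ and $m$ are constant, equal to $s$ and $t$ respectively, on the whole integer interval $[a,b(a)]$. Since $m(a)=t$ gives $t^2\le 2a$, we get $t^2\le 2k$ for every $k\ge a$; and $k\le b(a)\le q$ gives $2k<(t+1)^2$; hence $m(k)=t$. For $r$: if $s\ge 1$ then $r(a)=s$ gives $2^{s-1}<a\le 2^s$, so for $a\le k\le b(a)\le 2^s$ we obtain $2^{s-1}<k\le 2^s$, i.e. $r(k)=s$; the case $s=0$ forces $a=1$, whence $b(a)=\min(q,1)=1$ and there is nothing to prove. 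Taking $k=a$ shows $a\le b(a)$, and taking $k=b(a)$ shows $r(b(a))=s$ and $m(b(a))=t$.

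Next I would establish maximality. If $k>b(a)$ then $k>q$ or $k>2^s$. In the first case $2k\ge 2(q+1)\ge(t+1)^2$, so $m(k)\ge t+1$; in the second case $k>2^s$, so $r(k)\ge s+1$. In either case $k$ fails one of the conditions $r(k)=s$, $m(k)=t$, so $b(a)$ is indeed the largest positive integer with both properties.

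Finally, for positive integers $n_1,n_2$ with $a\le n_1\le n_2\le b(a)$, the first step gives $r(n_1)=r(n_2)=s$ and $m(n_1)=m(n_2)=t$, so $x(n_i)=z(n_i)-(r(a)+1)m(a)$ for $i=1,2$, and likewise $x(a)=z(a)-(r(a)+1)m(a)$ and $x(b(a))=z(b(a))-(r(a)+1)m(a)$. Since $z$ is non-decreasing, $z(a)\le z(n_1)\le z(n_2)\le z(b(a))$, and subtracting the common constant $(r(a)+1)m(a)$ yields $x(a)\le x(n_1)\le x(n_2)\le x(b(a))$, as required. I do not anticipate any genuine obstacle here; the only points needing care are the boundary inequality $2(q+1)\ge(t+1)^2$ and the degenerate case $s=0$, $a=1$.
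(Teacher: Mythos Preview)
Your argument is correct and follows the same route as the paper: maximality of $b(a)$ via the dichotomy $k>q$ or $k>2^s$, and the chain of inequalities via the monotonicity of $z$. You simply fill in details the paper leaves implicit (that $a\le b(a)$, that $r$ and $m$ are constant on $[a,b(a)]$, and the degenerate case $s=0$); the only slightly awkward spot is the sentence ``Taking $k=a$ shows $a\le b(a)$'', which would read more cleanly as: from $m(a)=t$ we have $2a<(t+1)^2$, hence $a\le q$, and from $r(a)=s$ we have $a\le 2^s$, hence $a\le b(a)$.
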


\begin{proof} If $n>b(a)$ then either $n>2^s$ and $r(n)\ge s+1$, or $n>q$, hence $2n\ge(t+1)^2$ and $m(n)\ge t+1$. The rest follows from the fact that the sequence $z(n)$ is non-decreasing.\end{proof}

\begin{remark}\label{R3} \rm If $s=2k$ is even then $(2^k)^2=2^{2k}<2\cdot2^{2k}=2^{2k+1}<2^{2k+2}=(2k+1)^2$, and hence $2^k\le m(2^s)<2^{k+1}$. Further, by obvious induction, there is $p$ such that $2^{2k+1}=3p+2$. As $2^{2k+1}=2\cdot2^k$, $z(2^s)=p$.

If $s=2k+1$ is odd then$(2^{k+1})^2=2\cdot2^{2k+1}$, $m(2^s)=2^{k+1}$, there is $q$ such that $2^{2k+2}=3q+1$ and $z(2^s)=q$.

Now, let $n$ be such that $2^s<n \le2^{s+1}$. Then $r(n)=s+1$ and, by Lemma \ref{L2}(i), $x(n)\ge z(2^s)-(s+2)m(2^{s+1})$. If $s=2k$ then $m(2^{s+1})=2^{k+1}$ and $3x(n)\ge2^{2k+1}-2-3(s+2)2^{k+1}=2^{k+1}(2^k-6k-6)-2\ge2^{k+1}-2>0$ whenever $k\ge6$. If $k\ge6$ and $s=2k+1$ then $m(2^{s+1})<2^{k+2}$ and $3x(n)>2^{2k+2}-1-3(s+2)2^{k+2}=2^{k+2}(2^k-6k-9)-1\ge2^{k+2}-1>0$.

We have proved that $x(n)>0$ for every $n>2^{12}=4096$ ($=4^6\cdot9^0$). Further, $z(4096)=2730$, $m(4096)=90$, $r(4096)=12$ and $x(4096)=1560$.\end{remark} 

\begin{theorem}\label{T1} Let $n$ be a positive integer.\newline
{\rm(i)} $x(n)=0$ if and only if $n\in\{436,451,529,545,546\}$.\newline
{\rm(ii)} $x(n)<0$ if and only if either $n\le435$ or $n=450$ or $513\le n\le528$.\newline
{\rm(iii)} $x(n)>0$ if and only if either $437\le n\le512$, $n\ne450,451$ or $n\ge530$, $n\ne545,546$.\newline
{\rm(iv)} $x(n)>0$ for every $n\ge547$.\newline
{\rm(v)} $x(n)$ has no maximal value.\newline
{\rm(vi)} $x(129)=-59$ and $x(n)\ge-58$ for $n\ne129$.
\end{theorem}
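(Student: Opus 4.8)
The plan is to reduce Theorem \ref{T1} to a finite verification and to organise that verification around three facts already in hand. Remark \ref{R3} supplies $x(n)>0$ for all $n\ge4097$ together with $z(4096)=2730$, $m(4096)=90$, $r(4096)=12$, $x(4096)=1560$; Lemma \ref{L2} propagates a single negative (respectively positive) value of $x$ over a whole block of consecutive integers; and Lemma \ref{L3} guarantees that on any interval on which $r$ and $m$ are both constant the function $x$ is non-decreasing, so its sign there is decided by the two endpoints together with the one or two integers $n$ with $z(n)=(r(n)+1)m(n)$. Hence it remains only to pin down $x$ on $\{1,\dots,4096\}$, the sole genuinely delicate window being $421\le n\le577$.

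For part (iv) I would iterate Lemma \ref{L2}(iii) downwards from $b=4096$: given a node $b$ with $x(b)>0$, the lemma produces the least $a$ with $2a\ge3(r(b)+1)m(b)+4$ and yields $x(n)>0$ for $a\le n\le b$; since $x$ is already known positive above $b$, this gives $x(n)>0$ for all $n\ge a$, and $a$ becomes the next node. Seven such steps give the decreasing chain $4096\to1757\to1064\to830\to662\to596\to563\to547$ (positive at each node: $x(1757)=463,\dots,x(563)=12,x(547)=1$), so $x(n)>0$ for all $n\ge547$. Dually, to launch parts (ii) and (iii) I would iterate Lemma \ref{L2}(ii) upwards from $b=2$ (using $x(2)=-3$) through $2,6,18,54,105,168,243,297,360,390,405,420$; the chain stalls at $420$ since $m$ first reaches $29$ only at $421$, so one evaluates $x(421)=-10$ by hand and takes one more step, reaching $435$ and proving $x(n)<0$ for all $n\le435$. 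It then remains to handle $421\le n\le577$, where Lemma \ref{L3} cuts the window into the six intervals on which $(r,m)$ equals $(9,29),(9,30),(9,31),(9,32),(10,32),(10,33)$; evaluating $z$ at the endpoints of each and solving $z(n)=(r+1)m$ on it shows that in this window $x(n)<0$ precisely for $n\in[421,435]\cup\{450\}\cup[513,528]$, $x(n)=0$ precisely for $n\in\{436,451,529,545,546\}$, and $x(n)>0$ otherwise. Combining this with the two chains yields (i), (ii) and (iii) simultaneously.

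For (v), the bound $3x(n)\ge2^{k+1}(2^k-6k-6)-2$ obtained in Remark \ref{R3} for $2^{2k}<n\le2^{2k+1}$ grows without bound as $k\to\infty$, so $x$ has no maximal value. For (vi), parts (ii)--(iv) confine $\min x$ to $[1,435]\cup\{450\}\cup[513,528]$, and a direct check gives $x\ge-11$ on $\{450\}\cup[513,528]$ ($x$ is non-decreasing on $[513,528]$, with $x(513)=-11$); so the minimum is attained on $[1,435]$. Partitioning $[1,435]$ into its $(r,m)$-constant intervals, on each of which $x$ is non-decreasing (Lemma \ref{L3}), the minimum there is attained at a left endpoint, i.e.\ at $n=1$, or $n=2^k+1$, or $n=\lceil k^2/2\rceil$; evaluating $x$ at these finitely many points --- the decisive ones lying just past a power of two, with $x(33)=-35$, $x(65)=-45$, $x(129)=-59$, $x(257)=-49$ --- shows the minimum is $-59$, attained only at $n=129$, while every other left endpoint yields $x\ge-57$ and the remaining members of the block containing $129$ satisfy $x(130)=-58$ and $x(n)\ge-58$; hence $x(n)\ge-58$ for all $n\ne129$.

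The main obstacle is not conceptual: once Remark \ref{R3} and Lemmas \ref{L2} and \ref{L3} are granted, the whole proof is bookkeeping. The real effort is organisational --- keeping the Lemma \ref{L2} chains short so that only a handful of values of $x$ must be computed, laying out the Lemma \ref{L3} block tables for $[1,435]$ and for $421\le n\le577$, and being scrupulous about the floor/ceiling boundaries in the definitions of $z$, $m$ and $r$, for it is exactly at the integers $2^k$ and $\lceil k^2/2\rceil$ that $x$ changes sign or plunges, so one off-by-one would corrupt the final answer.
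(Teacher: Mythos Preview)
Your proposal is correct and follows essentially the same strategy as the paper: reduce to $n\le4096$ via Remark~\ref{R3}, run Lemma~\ref{L2}(ii) upward and Lemma~\ref{L2}(iii) downward to squeeze the unresolved range into a short window, and then resolve that window block-by-block using Lemma~\ref{L3}. Your chains differ from the paper's only in bookkeeping---you feed each output $b$ (resp.\ $a$) back in as the next input rather than shifting by one, which is slightly slicker since the needed sign of $x$ at the new node is inherited from the previous step; and for part~(vi) you observe that only the left endpoints of the $(r,m)$-blocks need evaluating, whereas the paper tabulates all thirty-seven blocks in full. One triviality: your upward chain starts at $a=2$, so $n=1$ is not formally covered and needs the one-line check $x(1)=-1$.
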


\begin{proof} (i) -- (iv). Taking into account Remark \ref{R3}, we may assume that $n\le 4095$. From the table in Exercise \ref{R1}, it is clear that $x(n)<0$ for $1\le n\le13$. As $r(13)=4$ and $m(13)=5$, by Lemma \ref{L2}(ii) for $a=13$ we obtain $b=37$. As $x(38)=-31$, $r(38)=6$ and $m(38)=8$, Lemma \ref{L2}(ii) for $a=38$ yields $b=84$.
Using repeatedly Lemma \ref{L2}(ii) for $a=85,157,230,284,346,391,406,421$, we see that $x(n)<0$ for $1\le n\le435$ and $x(436)=0$.

 As $x(4095)=1559$, $r(4095)=12$ and $m(4095)=90$, Lemma \ref{L2}(iii) for $b=4095$ gives $a=1757$. Further, $x(1756)=462$, $r(1756)=11$ and $m(1756)=59$. By Lemma \ref{L2}(iii) for $b=1756$ we have $a=1064$.
Using repeatedly Lemma \ref{L2}(iii) for $b=1063,829,661,596,562$, we see that $x(n)>0$ for $4095\ge n\ge547$ and $x(546)=0$.

Now it remains to determine the sign of $x(n)$ for $436\le n\le 546$. We will use Lemma \ref{L3}. 
As $r(436)=9$ and $m(436)=29$, $b(436)=449$. However $x(437)=1$, and hence $x(n)>0$ for $437\le n\le449$. Further, $r(450)=9$, $m(450)=30$ and $b(450)=480$. As $x(450)=-1$, $x(451)=0$ and $x(452)=1$, $x(n)>0$ for $452\le n\le480$. 
Continuing in this way, $x(481)=10$, $r(481)=9$ and $m(481)=31$. Then $b(481)=511$ and $x(512)=21$. Thus $x(n)>0$ for $481\le n\le512$. Further, $r(513)=10$, $m(513)=32$ and $b(513)=544$. If $513\le n\le544$ then $x(n)=z(n)-352$. As the number 352 is even, there is exactly one $n$ with $z(n)=352$, namely $n=529$. Finally, $x(545)=0$.\newline
(v) Indeed, taking into account Remark \ref{R3}, if $B$ is an arbitrary positive integer and $k$ is such that $2^{k+2}>3B+1$ then $x(2^{2k+2})>B$.\newline
(vi) Using Lemma \ref{L3}, for every $i=1,2,\dots$ we define the number $k_i$ by $k_1=1$ and $k_{i+1}=b(k_i)+1$. Then, for every positive integer $i$, $r(k_i)=r(k_{i+1}-1)$, $m(k_i)=m(k_{i+1}-1)$, $r(k_{i+1})=r(k_i)+1$ or $m(k_{i+1})=m(k_i)+1$ and if $k_i\le n_1\le n_2\le k_{i+1}-1$ then $x(n_1)=z(n_1)-(r(k_i)+1)m(k_i)\le x(n_2)$. Thus $k_i\le n\le k_{i+1}-1$, $i=1,2,\dots$, are maximal intervals where $r(n),m(n)$ are constant.

The first 37 intervals $k_i\le n\le k_{i+1}-1$ are presented in the following table (the calculations, although a bit tedious, are extremely easy):

\smallskip\begin{enumerate}
\item If $n=1$ then $r(n)=0$, $m(n)=1$, $x(n)=-1$.
\item If $n=2$ then $r(n)=1$, $m(n)=2$, $x(n)=-3$.
\item If $3\le n\le4$ then $r(n)=2$, $m(n)=2$, $-5\le x(n)\le-4$.
\item If $5\le n\le7$ then $r(n)=3$, $m(n)=3$, $-9\le x(n)\le-8$.
\item If $n=8$ then $r(n)=3$, $m(n)=4$, $x(n)=-11$.
\item If $9\le n\le12$ then $r(n)=4$, $m(n)=4$, $-15\le x(n)\le-13$.
\item If $13\le n\le16$ then $r(n)=4$, $m(n)=5$, $-17\le x(n)\le-15$.
\item If $n=17$ then $r(n)=5$, $m(n)=5$, $x(n)=-19$.
\item If $18\le n\le24$ then $r(n)=5$, $m(n)=6$, $-25\le x(n)\le-21$.
\item If $25\le n\le31$ then $r(n)=5$, $m(n)=7$, $-26\le x(n)\le-22$.
\item If $n=32$ then $r(n)=5$, $m(n)=8$, $x(n)=-27$.
\item If $33\le n\le40$ then $r(n)=6$, $m(n)=8$, $-35\le x(n)\le-30$.
\item If $41\le n\le49$ then $r(n)=6$, $m(n)=9$, $-36\le x(n)\le-31$
\item If $50\le n\le60$ then $r(n)=6$, $m(n)=10$, $-37\le x(n)\le-31$.
\item If $61\le n\le64$ then $r(n)=6$, $m(n)=11$, $-37\le x(n)\le-35$.
\item If $65\le n\le71$ then $r(n)=7$, $m(n)=11$, $-45\le x(n)\le-41$.
\item If $72\le n\le84$ then $r(n)=7$, $m(n)=12$, $-49\le x(n)\le-41$.
\item If $85\le n\le97$ then $r(n)=7$, $m(n)=13$, $-48\le x(n)\le-40$.
\item If $98\le n\le112$ then $r(n)=7$, $m(n)=14$, $-47\le x(n)\le-38$.
\item If $113\le n\le127$ then $r(n)=7$, $m(n)=15$, $-45\le x(n)\le-36$.
\item If $n=128$ then $r(n)=7$, $m(n)=16$, $x(n)=-43$.
\item If $129\le n\le144$ then $r(n)=8$, $m(n)=16$, $-59\le x(n)\le-49$
\item If $145\le n\le161$ then $r(n)=8$, $m(n)=17$, $-57\le x(n)\le-46$.
\item If $162\le n\le180$ then $r(n)=8$, $m(n)=18$, $-55\le x(n)\le-43$.
\item If $181\le n\le199$ then $r(n)=8$, $m(n)=19$, $-51\le x(n)\le-39$.
\item If $200\le n\le220$ then $r(n)=8$, $m(n)=20$, $-47\le x(n)\le-34$.
\item If $221\le n\le241$ then $r(n)=8$, $m(n)=21$, $-42\le x(n)\le-29$.
\item If $242\le n\le256$ then $r(n)=8$, $m(n)=22$, $-37\le x(n)\le-28$.
\item If $257\le n\le264$ then $r(n)=9$, $m(n)=22$, $-49\le x(n)\le-45$.
\item If $265\le n\le287$ then $r(n)=9$, $m(n)=23$, $-54\le x(n)\le-39$.
\item If $288\le n\le312$ then $r(n)=9$, $m(n)=24$, $-49\le x(n)\le-33$.
\item If $313\le n\le337$ then $r(n)=9$, $m(n)=25$, $-42\le x(n)\le-26$.
\item If $338\le n\le364$ then $r(n)=9$, $m(n)=26$, $-35\le x(n)\le-18$.
\item If $365\le n\le391$ then $r(n)=9$, $m(n)=27$, $-27\le x(n)\le-10$.
\item If $392\le n\le420$ then $r(n)=9$, $m(n)=28$, $-19\le x(n)\le-1$.
\item If $421\le n\le449$ then $r(n)=9$, $m(n)=29$, $-10\le x(n)\le9$.
\item If $450\le n\le480$ then $r(n)=9$, $m(n)=30$, $-1\le x(n)\le19$.
\end{enumerate}
\smallskip
From the preceding table it is clear that  $x(n)\ge-59$ for $1\le n\le450$ and $x(129)=-59$. However $x(513)=-11$ and $b(513)=544$, and hence (ii) implies that $x(n)\ge-11$ whenever $n\ge451$. As $129=k_{22}$ and $x(130)=-58$, $x(n)\ge-58$ for $n\ne129$. \end{proof}

\begin{remark} \rm Thus 547 is the least positive integer $n$ such that $x(m)>0$ for each $m\ge n$. Notice that 547 is the onehundred-first prime number and that 101 is a~prime as well. Besides, $1+0+1=2$, $16=2^4=5+4+7$, $7=1+6$.
\end{remark}

\end{document}